\newtheorem{definition}{Definition}
\newtheorem{theorem}{Theorem}
\newtheorem{proposition}{Proposition}[section]
\newtheorem{corollary}[proposition]{Corollary}
\newtheorem{lemma}[proposition]{Lemma}
\theoremstyle{break} 
\newenvironment{proof}%
{{\par\noindent \bf Proof. \nobreak}}%
{\nobreak \removelastskip \nobreak \hfill $\Box$ \medbreak}
{{\par\noindent \bf Proof \nobreak}}%
{\nobreak \removelastskip \nobreak \hfill $\Box$ \medbreak}
{{\par\noindent \bf Proof lemma. \nobreak}}%
{\nobreak \removelastskip \nobreak \bf End proof lemma. \medbreak}
\newenvironment{remark}{\par \medskip \noindent {\bf Remark. }\nobreak}{\par \medskip}
\def\paragraph#1{{\bf #1\ }}
\newcommand{\RN}[1]{%
  \textup{\uppercase\expandafter{\romannumeral#1}}%
}
\newcommand{\expo}{\mathrm{e}}
\newcommand{\dd}{\mathrm{d}}
\newcommand{\NN}{\mathrm{N}}
\newcommand{\overbar}[1]{\mkern 1.5mu\overline{\mkern-1.5mu#1\mkern-1.5mu}\mkern 1.5mu}
\title{A biased dollar exchange model involving bank and debt with discontinuous equilibrium}
\author{Fei Cao \footnotemark[1] \and Stephanie Reed \footnotemark[2]}
\begin{document}
\maketitle

\footnotetext[1]{University of Massachusetts Amherst - Department of Mathematics and Statistics, Amherst, MA 01003, USA}
\footnotetext[2]{California State University, Fullerton - Department of Mathematics, 800 N State College Blvd, Fullerton, CA 92831, USA}

\tableofcontents

\begin{abstract}
In this work, we investigate a biased dollar exchange model with collective debt limit, in which agents picked at random (with a rate depending on the amount of dollars they have) give at random time a dollar to another agent being picked uniformly at random, as long as they have at least one dollar in their pockets or they can borrow a dollar from a central bank if the bank is not empty. This dynamics enjoys a mean-field type interaction and partially extends the recent work \cite{cao_uncovering_2022} on a related model. We perform a formal mean-field analysis as the number of agents grows to infinity and as a by-product we discover a two-phase (ODE) dynamics behind the underlying stochastic $N$-agents dynamics. Numerical experiments on the two-phase (ODE) dynamics are also conducted where we observe the convergence towards its unique equilibrium in the large time limit.
\end{abstract}

\noindent {\bf Key words: Econophysics, Agent-based model, Interacting agents, Mean-field, Two-phase, Bank}

\section{Introduction}
\setcounter{equation}{0}

Econophysics is a subfield of statistical physics that draw concepts and techniques from traditional physics and apply them to economics, finance, and related fields, and we refer the readers to the pioneer work \cite{dragulescu_statistical_2000} for many models motivated from econophysics. Among many important tasks in this area of research, a fundamental mission is to demystify how various (macroscopic) economical phenomena could be explained by (microscopic) laws in classical statistical physics under certain modeling assumptions, and we refer to \cite{kutner_econophysics_2019,pareschi_interacting_2013,pereira_econophysics_2017,savoiu_econophysics_2013} for a general review on using kinetic theory for the modelling of wealth (re-)distributions.

Motivations for studying econophysics models are at least two-fold: from the perspective of a policy maker, how to exert influence on the growing wealth inequality in order to mitigate the alarming gap between rich and poor is a central issue to be dealt with. From a mathematical viewpoint, the underlying mechanisms behind the formation of macroscopic phenomena, for instance various possible wealth distributions arising from distinct agent-based dollar exchange models, remain to be thoroughly understood. For a given agent-based model, we aim at identifying a limit dynamics, which is fully deterministic, when we send the number of individuals/players to infinity, and then the deterministic system will be further analyzed with the hope of proving its convergence to equilibrium (if there is one) in the large time limit. This philosophy has been successfully implemented among the overabundant literatures across different fields of applied mathematics, see for instance \cite{cao_asymptotic_2020,cao_k_2021,carlen_kinetic_2013,motsch_short_2018}.

In this work, we consider a simple mechanism for money exchange involving a bank, meaning that there are a fixed number of agents (denoted by $N$) and one bank. We denote by $S_i(t)$ the amount of dollars the agent $i$ has at time $t$ and we suppose that $\sum_{i=1}^N S_i(0) = N\,\mu$ for some fixed $\mu \in \mathbb{N}_+$. Thus, each agent in this closed economic system has $\mu$ dollars on average. Moreover, we denote by $B_* := B_c + B_d$ the initial amount of dollars in the bank, where $B_c$ and $B_d$ represent the amount of dollars owned by the bank in the form of ``cash'' and in the form of ``debt'' (borrowed by agents), respectively. Also, we introduce another parameter $\nu \in \mathbb{N}_+$, which measures the ratio of the bank’s initial wealth to the initial combined wealth of all the agents, and set $B_* = N\,\mu\,\nu$ to be the total amount of dollars put in the bank initially. We emphasize here that the parameters $\mu$ and $\nu$ are both dimensionless constants.

The model investigated in this work was a natural variant of the model proposed in \cite{xi_required_2005} and revisited in several recent works \cite{cao_uncovering_2022,lanchier_rigorous_2018-1}, which we now briefly recall first: at random times (generated by an exponential law), an agent $i$ (the ``giver'') and an agent $j$ (the ``receiver'') are picked uniformly at random. If either the ``giver'' $i$ has at least one dollar (i.e. $S_i\geq 1$) or if the central bank has ``cash'' (i.e. $B_c\geq 1$), then the receiver $j$ receives a dollar. Otherwise, when the giver $i$ has no dollar and the bank has no cash, then nothing happens. To further clarify the aforementioned model, we emphasize that dollars in the central bank are untouched when the giver $i$ has at least one dollar, and the giver $i$ will borrow a dollar from the bank (as long as the bank has money) if he/she has no dollar to give. The aforementioned model is termed as the \textbf{unbiased exchange model with collective debt limit} in \cite{cao_uncovering_2022} and the \textbf{one-coin model with collective debt limit} in \cite{lanchier_rigorous_2018-1}. Notice that when the bank gives a dollar to agent $j$, there is still one dollar withdrew from the giver $i$, i.e., the debt of agent $i$ increases. The debt of agent $i$ could be reduced once it will become a ``receiver''. It is also important to notice that in this model the bank never loses money, it just transforms its ``cash'' $B_c$ into ``debt'' (and vice versa).  Without the bank, agents can only give a dollar when they have at least one dollar (hence no debt is allowed).

We now introduce the model considered in the present manuscript as a biased version of the aforementioned model, i.e., we drop the rule of selecting the ``giver'' uniformly at random in favor of the requirement that the ``giver'' agent $i$ will be picked (at each jump time) at a rate proportional to $f(S_i)$, where $f \colon \mathbb R \to (0,\infty)$ is a strictly positive function. From now on, we terms the model as the \textbf{$f$-biased exchange model with collective debt limit}, which can be represented by \eqref{biased_exchange_with_debt} below.
\begin{equation}
\label{biased_exchange_with_debt}
(S_i,S_j)~ \begin{tikzpicture} \draw [->,decorate,decoration={snake,amplitude=.4mm,segment length=2mm,post length=1mm}]
  (0,0) -- (.6,0); \node[above,red] at (0.3,0) {\small{$f(S_i)$}};\end{tikzpicture}~  (S_i-1,S_j+1) \qquad \text{ if}~ S_i\geq 1~ \text{{\bf or}}~ B_c \geq 1.
\end{equation}
We emphasize that the unbiased exchange model with collective debt limit investigated in \cite{cao_uncovering_2022} is a special case of the model we introduced here where $f$ is a fixed positive constant (which can be normalized to $1$ without loss of generality). For now the function $f$ is an arbitrary positive function from $\mathbb R$ to $(0,\infty)$, but later we will impose appropriate constraints on $f$ for reasons to be further clarified in the subsequent development of our analysis.

\begin{remark}
In order to have the correct asymptotic as the number of agents goes to infinity $N \to \infty$, we need to adjust the rate $f(S_i)$ by normalizing by $N$, which amounts to replacing $f(S_i)$ in \eqref{biased_exchange_with_debt} by $f(S_i) \slash N$ so that the rate of a typical agent giving a dollar per unit time is of order $1$. Here by saying ``a typical agent'' we mean an agent picked (uniformly) at random from the crowd of $N$ agents.
\end{remark}

The fundamental problem of interest is the derivation/idenfication of the limiting money distribution among the agents as the total number of agents $N$ and time $t$ become large. We propose two analytical approaches for addressing this problem with illustration provided by figure \ref{fig:scheme_sketch_full} below.  The first approach, detailed in section \ref{sec:sec2}, relies on the study of the limiting money distribution for all (fixed) values of the number $N$ of agents as time goes to infinity and then simplification of the probability that a typical agent has $n$ dollars at (``thermodynamic'') equilibrium in the large population limit.

\begin{figure}[!htb]
  \centering
  \includegraphics[width=.97\textwidth]{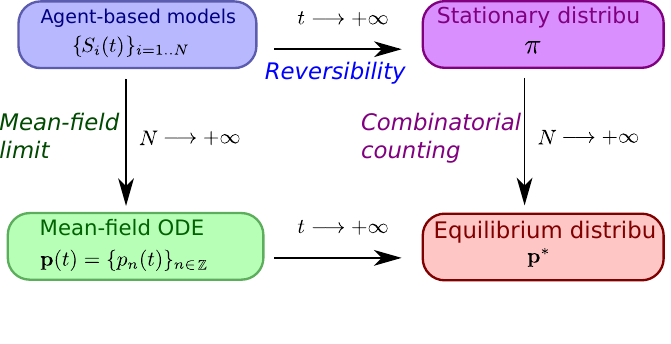}
\caption{Schematic illustration of the strategy of the limiting procedures.}
\label{fig:scheme_sketch_full}
\end{figure}

The second approach is a mean-field approach carried out in the spirit of the recent work \cite{cao_uncovering_2022}, which will be presented in section \ref{sec:sec3}. The first step in the mean-field approach lies in the derivation of its limit dynamics as the number of agents goes to infinity (i.e., $N \to \infty$). With this goal, we introduce the probability distribution of dollars:
\begin{equation}
  \label{eq:p}
  {\bf p}(t)=\left(\ldots,p_{-n}(t),\ldots,p_{-1}(t),p_0(t),p_1(t),\ldots,p_n(t),\ldots\right) 
\end{equation}
with $p_n(t)= \{``\text{probability that a typical agent has } n \text{ dollars at time}~ t "\}$. The evolution of ${\bf p}(t)$ will be given by a system of (deterministic) nonlinear ordinary differential equations. However, to rigorously justify the transition from a stochastic interacting agents systems into a deterministic set of ODEs, one needs the so-called \emph{propagation of chaos} \cite{sznitman_topics_1991}. Heuristically speaking, the propagation of chaos property in the specific context of our model says that the (random) agent $i$ and agent $j$ picked to engage in the binary trading activity \eqref{biased_exchange_with_debt} are \emph{statistically independent} as $N \to \infty$. The rigorous derivation of the mean-field ODE system \eqref{eq:evolution_p} from its underlying stochastic $N$-agents dynamics is out of the scope of the present paper, but the derivation has been fully justified in various models arising from econophysics, see for instance \cite{cao_derivation_2021,cao_entropy_2021,cao_explicit_2021,cao_interacting_2022,cortez_uniform_2022}. The major difficulty in the mean-field argument lies in the fact that the evolution of ${\bf p}(t)$ is split into two phases. Indeed, the evolution of ${\bf p}(t)$ changes at the first time when there is no more cash in the bank. We will denote by $t_*$ the time at which such event occur, i.e., at $t_*$ the bank is empty for the first time. We will show that the evolution equation for ${\bf p}(t)$ takes the following form
\begin{equation}
  \label{eq:evolution_p}
  \begin{array}{cccc}
  \text{\bf Phase I:} &  \hspace{.5cm}  \partial_t {\bf p} = Q_1[{\bf p}] \hspace{.5cm} & \text{for} & 0≤t≤t_* \\ \smallskip
  \text{\bf Phase II:} & \hspace{.5cm} \partial_t {\bf p} =  Q_2[{\bf p}] \hspace{.5cm} & \text{for} & t>t_*
  \end{array}
\end{equation}
where the exact expressions for the operator $Q_1$ and $Q_2$ are given by \eqref{eq:Q_1_b} and \eqref{eq:Q_2_b}, respectively. Once the large population limit has been carried out and a limit (deterministic) dynamics has been uncovered, one naturally attempts to investigate the asymptotic behavior of the probability mass function ${\bf p}(t)$ as $t \to \infty$, with the intent of proving convergence to the unique equilibrium distribution (if such unique equilibrium exists). Unfortunately, we do not aim to address the problem of the convergence to equilibrium for the two-phase ODE system \eqref{eq:evolution_p} analytically, and we will be content with numerical evidence on such convergence behavior.

Although we will only investigate a specific binary dollar exchange model in the current work, other exchange rules can also be imposed and studied, leading to other econophysics models. To name a few, the so-called immediate exchange model introduced in \cite{heinsalu_kinetic_2014} assumes that pairs of agents are randomly and uniformly picked at each random time, and each agent transfers a random fraction of their money to the other agent, where these fractions are independent and uniformly distributed on $[0,1]$. The so-called uniform reshuffling model investigated in \cite{cao_entropy_2021,dragulescu_statistical_2000,lanchier_rigorous_2018,matthes_steady_2008} proposes that the total amount of money of two randomly and uniformly picked agents possessed before interaction is uniformly redistributed among the two agents after interaction. The so-called repeated averaging model studied for instance in \cite{cao_explicit_2021} studies the mechanism where two randomly selected agents share half of their wealth with each other at each binary exchange. The binomial reshuffling model proposed in a recent work \cite{cao_binomial_2022} is a variant of the uniform reshuffling mechanism in which the agents' combined wealth is redistributed according to a binomial distribution. For models with saving propensity, models involving debts, or other econophysics models, we refer the readers to \cite{boghosian_h_2015,chakraborti_statistical_2000, chatterjee_pareto_2004, lanchier_rigorous_2018-1,cao_uncovering_2022,cohen_bounding_2023,during_kinetic_2008,torregrossa_wealth_2017} and the references therein.

\section{Markov chain approach}\label{sec:sec2}
\setcounter{equation}{0}

In this section, we will tackle the problem of finding the limiting money distribution (as $N,\,t \to \infty$) using the techniques employed in \cite{lanchier_rigorous_2017,lanchier_rigorous_2018,lanchier_rigorous_2018-1,lanchier_role_2018}. Define $\mathcal{C}(N,M,B_*)$ to be the set of possible configurations with $N$ agents having a total initial wealth of $M = N\,\mu$ dollars, where the initial amount of dollars in the bank equals $B_* = N\,\mu\,\nu$, i.e.,
\[\mathcal{C}(N,M,B_*) = \{S \colon \{1,2,\ldots,N\}\to \mathbb{Z} \mid \sum_{i=1}^N S_i = M,\,\, -B_* \leq S_i \leq M + B_*\}.\]

Let us fix any $\xi\in \mathcal{C}(N-1,M-1,B_*)$ and introduce $\xi^i\in \mathcal{C}(N,M,B_*)$ defined via
$$\xi^i(j) = \xi(i) + \mathbbm{1}\{i=j\}.$$

Then $\{S(t)\}_{t\geq 0}$ is a continuous-time Markov chain having state space $\mathcal{C}(N,M,B_*)$. Note also that all transitions of the process are of the form $\xi^i\to \xi^j$ for some $i\neq j$.

As indicated in the introduction, the rates of transition from $\xi^i$ to $\xi^j$ with $i\neq j$ for the process $S(t)$ are given by
\[q(\xi^i,\xi^j) = \frac{f(\xi^i(i))}{N}.\]
We first show that $\{S(t)\}_{t\geq 0}$ is a finite, irreducible and aperiodic Markov chain, whence it admits a unique stationary distribution.

\begin{lemma}\label{lem:usd}
The process $\{S(t)\}_{t\geq 0}$ has a unique stationary distribution $\pi:\mathcal{C}(N,M,B_*) \to \mathbb{R}_+$ such that
\[\lim_{t\to\infty}\mathbb P[S(t) = \xi] = \pi(\xi).\]
\end{lemma}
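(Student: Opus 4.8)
The plan is to reduce the statement to the classical convergence theorem for finite continuous-time Markov chains, so that the only real content is verifying \emph{irreducibility} of $\{S(t)\}_{t\ge 0}$ on (the relevant part of) $\mathcal{C}(N,M,B_*)$. Finiteness is immediate from the definition of the state space. Aperiodicity requires no separate argument: for a finite irreducible generator $Q$ one has $\mathbb P[S(t)=\xi\mid S(0)=\xi]=(e^{tQ})_{\xi\xi}>0$ for every $t>0$, so the chain is automatically aperiodic in the continuous-time sense and the ergodic theorem applies. Finally, since $f>0$ everywhere, every transition $\xi^i\to\xi^j$ that is \emph{structurally permitted} by rule \eqref{biased_exchange_with_debt} (i.e.\ the giver currently holds at least one dollar, \textbf{or} the bank currently holds at least one dollar of cash) carries the strictly positive rate $f(\xi^i(i))/N$. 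Hence irreducibility is equivalent to the purely combinatorial claim that any configuration can be transformed into any other by a finite chain of legal unit transfers ``agent $i$ gives one dollar to agent $j$''.

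First I would route every configuration through a common debt-free ``hub''. Because $M=N\mu\ge 1$, in any configuration some agent has a strictly positive balance; so whenever some agent has a negative balance, the move ``transfer one dollar from a positive-balance agent to a negative-balance agent'' is legal (the giver holds $\ge 1$ dollar), preserves the total $M$, and strictly decreases the aggregate debt $D:=\sum_i \max(-S_i,0)$. Iterating finitely many times, one reaches the debt-free set $\mathcal{C}_0:=\{S\in\mathcal{C}(N,M,B_*): S_i\ge 0 \text{ for all } i\}$, without ever using the bank. Within $\mathcal{C}_0$ every move ``transfer one dollar from an agent holding $\ge 1$ to any other agent'' is legal, and it is elementary that the set of nonnegative integer vectors with fixed coordinate sum $M$ is connected under such unit transfers; hence all states of $\mathcal{C}_0$ communicate with one another.

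It remains to go from the hub to an arbitrary target $\eta$. Here one first uses transfers inside $\mathcal{C}_0$ to position the dollars, and then creates the required negative balances by ``borrowing'': to drive agent $i$ down to $\eta_i<0$, one repeatedly lets agent $i$ give a dollar, which is legal as long as the running aggregate debt is strictly below $B_*$. By scheduling the operations so that the running aggregate debt never exceeds the target debt $D_\eta:=\sum_i\max(-\eta_i,0)$, every such borrow stays legal, since $B_*=N\mu\nu\ge 1$ and just before the last borrow the running debt is at most $B_*-1$, so the bank still holds cash. Combining the two directions shows that any two configurations reachable by the dynamics communicate, so the chain is irreducible on the closed communicating class on which it actually lives; finiteness plus irreducibility then yields, by the standard theory of finite continuous-time Markov chains, a unique stationary distribution $\pi$ with $\lim_{t\to\infty}\mathbb P[S(t)=\xi]=\pi(\xi)$ for every $\xi$, irrespective of the law of $S(0)$.

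I expect the last step to be the main obstacle, and for a subtle reason beyond bookkeeping: the bank condition couples \emph{all} coordinates (a borrow is permitted only when the \emph{aggregate} debt is below $B_*$), so one must schedule the unit transfers so as never to push the aggregate debt above $B_*$ prematurely, while also respecting the per-agent floors $S_i\ge -B_*$ along the entire path — which is exactly why routing through the debt-free hub (where both constraints are inactive) is the convenient device. One should also note that $\mathcal{C}(N,M,B_*)$ as written may contain configurations whose aggregate debt exceeds $B_*$ (e.g.\ two agents each at $-B_*$ when $N$ is large); these are never visited because no transition increases the aggregate debt beyond $B_*$, so strictly speaking $\pi$ is supported on, and the irreducibility argument above pertains to, the closed subclass $\{\xi\in\mathcal{C}(N,M,B_*): D_\xi\le B_*\}$ in which the physical dynamics evolves. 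Everything else — finiteness, positivity of the jump rates, automatic aperiodicity — is routine.
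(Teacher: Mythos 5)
Your argument is correct, and it supplies precisely what the paper omits: the paper's ``proof'' of Lemma~\ref{lem:usd} is a one-line deferral to \cite{lanchier_rigorous_2017,lanchier_rigorous_2018,lanchier_rigorous_2018-1,lanchier_role_2018} with no details, whereas you carry out the standard path-construction argument those references use, adapted to the collective debt rule. The reduction is the right one --- finiteness is immediate, aperiodicity is vacuous for a continuous-time chain since $(e^{tQ})_{\xi\xi}>0$ for all $t>0$, and everything rests on irreducibility, which in turn reduces to a combinatorial connectivity claim because $f>0$ makes every structurally legal transfer have positive rate. Routing through the debt-free hub $\mathcal{C}_0$ is the convenient device, and your scheduling observation --- that the borrows needed to reach a target $\eta$ can be ordered so that the running aggregate debt never exceeds $D_\eta\le B_*$, so the bank holds cash at the moment of each borrow --- is exactly where the \emph{collective} (rather than per-agent) debt limit makes the construction nontrivial; a slightly more explicit schedule (park all $M$ dollars on one agent $i_0$ with $\eta_{i_0}\ge 1$, perform the $D_\eta$ borrows one at a time with $i_0$ as receiver, then redistribute among the nonnegative agents) closes the remaining bookkeeping. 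Your final caveat is also correct and worth recording: $\mathcal{C}(N,M,B_*)$ as defined contains configurations with aggregate debt exceeding $B_*$; these are transient (they lead into $\mathcal{C}_0$ by your first step but can never be re-entered, since no legal move pushes the aggregate debt above $B_*$), so the chain has a unique closed communicating class $\{\xi:\sum_i\max(-\xi(i),0)\le B_*\}$ on which $\pi$ is supported and $\pi$ vanishes off it. That is harmless for the statement of Lemma~\ref{lem:usd}, but it does mean the explicit formula \eqref{eq:stationary_dist} of Lemma~\ref{lem:rev} should be read as normalized over, and supported on, that class rather than over all of $\mathcal{C}(N,M,B_*)$.
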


\begin{proof}
The proof of this result can be carried out in a similar fashion as shown in earlier works \cite{lanchier_rigorous_2017,lanchier_rigorous_2018,lanchier_rigorous_2018-1,lanchier_role_2018} for other econophysics models, hence we will skip the details and leave it to the readers.
\end{proof}

\begin{lemma}\label{lem:rev}
The process $\{S(t)\}_{t\geq 0}$ is reversible and has the stationary distribution given by
\begin{equation}\label{eq:stationary_dist}
\pi(\xi) = \frac{\theta(\xi)}{\sum_{\xi'\in\mathcal{C}(N,M,B_*)} \theta(\xi')},
\end{equation}
where
\begin{equation}\label{eq:def_of_theta}
\displaystyle\theta(\xi') = \prod_{i=1}^N\prod_{j=-B_*}^{\xi'(i)} \frac{1}{f(j)}.
\end{equation}
\end{lemma}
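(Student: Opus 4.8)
The plan is to verify the detailed balance (local balance) relations directly: I will check that the candidate $\pi$ defined in \eqref{eq:stationary_dist} (with $\theta$ as in \eqref{eq:def_of_theta}) satisfies
\[
\pi(S)\,q(S,S') \;=\; \pi(S')\,q(S',S) \qquad \text{for all } S,S' \in \mathcal{C}(N,M,B_*).
\]
By the standard argument (summing this relation over $S$) it then follows simultaneously that $\pi$ is a stationary distribution and that the chain is reversible; together with the uniqueness from Lemma~\ref{lem:usd}, this identifies $\pi$ as \emph{the} stationary distribution. What makes the computation short is that the only transitions are of the form $\xi^i \to \xi^j$ with $i \neq j$ (a single dollar moved from agent $i$ to agent $j$) and that $\xi^i$ and $\xi^j$ agree in every coordinate except $i$ and $j$, so that in the detailed balance equation all factors of $\theta$ contributed by the remaining $N-2$ agents cancel and only a two-site identity survives.

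It is convenient to rewrite $\theta$ in product form. Setting $g(n) := \prod_{j=-B_*}^{n} 1/f(j)$ for admissible single-agent holdings $n \geq -B_*$, we have $\theta(\xi') = \prod_{i=1}^N g(\xi'(i))$ and the one-step recursion $f(n) = g(n-1)/g(n)$. Writing $\xi$ for the common baseline obtained from $\xi^i$ by removing a dollar from agent $i$, and using $q(\xi^i,\xi^j) = f(\xi^i(i))/N = f(\xi(i)+1)/N$, the detailed balance relation for the pair $\xi^i \leftrightarrow \xi^j$, after cancelling the common factor $\prod_{k \neq i,j} g(\xi(k))$, reduces to
\[
g(\xi(i)+1)\,g(\xi(j))\,\frac{f(\xi(i)+1)}{N} \;=\; g(\xi(i))\,g(\xi(j)+1)\,\frac{f(\xi(j)+1)}{N},
\]
and substituting $f(\xi(i)+1) = g(\xi(i))/g(\xi(i)+1)$ and $f(\xi(j)+1) = g(\xi(j))/g(\xi(j)+1)$ collapses both sides to $g(\xi(i))\,g(\xi(j))/N$. (Equivalently, one may \emph{derive} the formula for $\theta$ by positing the product ansatz $\theta(\xi') = \prod_i g(\xi'(i))$, letting the balance relation force the recursion for $g$, and recognizing the resulting $g$ as the inner product appearing in \eqref{eq:def_of_theta}.) The normalizing constant $\sum_{\xi'} \theta(\xi')$ is finite since $\mathcal{C}(N,M,B_*)$ is a finite set.

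The one genuinely non-formal point, and the step I would be most careful about, is to confirm that the transition graph is \emph{symmetric}: whenever $q(\xi^i,\xi^j) > 0$ we must also have $q(\xi^j,\xi^i) > 0$, since otherwise no everywhere-positive measure could satisfy detailed balance. A forward jump $\xi^i \to \xi^j$ is permitted exactly when, in the state $\xi^i$, the giver $i$ holds at least one dollar or the bank still has cash, and the reverse jump obeys the same rule with $i$ and $j$ interchanged. I would split into four cases according to the signs of $\xi(i)$ and $\xi(j)$, using that $B_c \geq 0$ in every configuration of the chain and that $B_c$ changes by at most $1$ across a single transition: if $\xi(i) \geq 0$ or $\xi(j) \geq 0$, both jumps are always legal (the side whose giver has a nonnegative baseline holding needs no bank cash, while on the opposite side $B_c \geq 1$ automatically), and if $\xi(i), \xi(j) < 0$, both jumps are legal precisely when $B_c \geq 1$ in one --- hence in both --- of $\xi^i, \xi^j$. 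Once this accounting on the bank constraint is settled, detailed balance holds as displayed, so $\pi$ is invariant and reversible; by Lemma~\ref{lem:usd} this $\pi$ is then the stationary distribution, which completes the proof.
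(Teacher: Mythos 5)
Your proof is correct and follows essentially the same route as the paper: both verify the detailed balance relation $\theta(\xi^i)q(\xi^i,\xi^j)=\theta(\xi^j)q(\xi^j,\xi^i)$, with your product form $\theta(\xi')=\prod_i g(\xi'(i))$ and the recursion $f(n)=g(n-1)/g(n)$ being just a repackaging of the paper's identity $\theta(\xi^k)=\theta(\xi)/f(\xi(k)+1)$. Your additional case analysis confirming that the transition graph is symmetric under the bank-cash constraint is a sound extra verification of a point the paper leaves implicit.
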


\begin{proof}
Let $\xi^i,\xi^j\in\mathcal{C}(N,M,B_*)$ with $i\neq j$. Then
\[q(\xi^i,\xi^j) = \frac{f(\xi^i(i))}{N} = \frac{f(\xi(i)+1)}{N},\]
and
\[q(\xi^j,\xi^i) = \frac{f(\xi^j(j))}{N} = \frac{f(\xi(j)+1)}{N},\]
which yields
\begin{equation}\label{eq:rev1}
\frac{q(\xi^i,\xi^j)}{f(\xi(i)+1)} = \frac{q(\xi^j,\xi^i)}{f(\xi(j)+1)}.
\end{equation}
Note that for any $k=1,2,\ldots,N$, we have that
\begin{equation}\label{eq:rev2}
\begin{split}
\theta(\xi^k) & = \prod_{i=1}^N\prod_{j=-B_*}^{\xi^k(i)} \frac{1}{f(j)}\\
 & = \left(\prod_{\substack{i=1 \\ i\neq k}}^N\prod_{j=-B_*}^{\xi^k(i)} \frac{1}{f(j)}\right)\left( \prod_{j=-B_*}^{\xi^k(k)} \frac{1}{f(j)}\right)\\
& = \left(\prod_{\substack{i=1 \\ i\neq k}}^N\prod_{j=-B_*}^{\xi(i)} \frac{1}{f(j)}\right)\left( \prod_{j=-B_*}^{\xi(k)+1} \frac{1}{f(j)}\right)\\
& = \left(\prod_{\substack{i=1 \\ i\neq k}}^N\prod_{j=-B_*}^{\xi(i)} \frac{1}{f(j)}\right)\left( \prod_{j=-B_*}^{\xi(k)} \frac{1}{f(j)}\right)\frac{1}{f(\xi(k)+1)}\\
& = \left(\prod_{i=1}^N\prod_{j=-B_*}^{\xi(i)} \frac{1}{f(j)}\right)\frac{1}{f(\xi(k)+1)} = \frac{\theta(\xi)}{f(\xi(k)+1)}.
 \end{split}
\end{equation}
Putting equations \eqref{eq:rev1} and \eqref{eq:rev2} together gives
\begin{equation}
\theta(\xi^i)q(\xi^i,\xi^j) = \frac{\theta(\xi)}{f(\xi(i)+1)}q(\xi^i,\xi^j)=\frac{\theta(\xi)}{f(\xi(j)+1)}q(\xi^j,\xi^i) = \theta(\xi^j)q(\xi^j,\xi^i),
\end{equation}
which ends the proof.
\end{proof}

We are now ready to state the following general convergence result, which follows readily from Lemmas \ref{lem:usd} and \ref{lem:rev}.
\begin{theorem}\label{thm:genf_dist}
The fraction of agents with $n$ dollars in the long run, or equivalently the probability that a typical agent has $n$ dollars in the large time limit, is given by
\begin{equation}\label{eq:convergence}
\lim_{t\to\infty} \mathbb P[S_i(t) = n] = \displaystyle\displaystyle\sum_{\xi\,:\,\xi(i) = n} \pi(\xi)= \displaystyle\frac{\displaystyle\sum_{\xi\,:\,\xi(i) = n} \theta(\xi)}{\displaystyle\sum_{\xi} \theta(\xi)}.
\end{equation}
\end{theorem}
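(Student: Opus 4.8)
The plan is to deduce the statement directly from Lemmas~\ref{lem:usd} and~\ref{lem:rev} by decomposing the marginal event $\{S_i(t)=n\}$ into a finite disjoint union of configuration-level events. First I would observe that, because the state space $\mathcal{C}(N,M,B_*)$ is finite, for each fixed $i$ and each integer $n$ with $-B_* \le n \le M + B_*$ one has the partition
\[
\{S_i(t) = n\} = \bigsqcup_{\xi \,:\, \xi(i) = n} \{S(t) = \xi\},
\]
so that $\mathbb P[S_i(t) = n] = \sum_{\xi\,:\,\xi(i)=n}\mathbb P[S(t)=\xi]$ is a finite sum. If $n$ lies outside the admissible range the index set is empty and both sides of \eqref{eq:convergence} vanish, so there is nothing to prove.

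Next I would let $t \to \infty$ on both sides. Since the sum over $\{\xi : \xi(i)=n\}$ has finitely many terms, the limit passes through the summation, and Lemma~\ref{lem:usd} gives $\mathbb P[S(t)=\xi] \to \pi(\xi)$ for every $\xi$; hence $\lim_{t\to\infty}\mathbb P[S_i(t)=n] = \sum_{\xi\,:\,\xi(i)=n}\pi(\xi)$, which is the first equality in \eqref{eq:convergence}. Finally I would substitute the explicit reversible stationary distribution from Lemma~\ref{lem:rev}, namely $\pi(\xi) = \theta(\xi)\big/\sum_{\xi'\in\mathcal{C}(N,M,B_*)}\theta(\xi')$ with $\theta$ as in \eqref{eq:def_of_theta}, and pull the common normalizing constant out of the numerator, obtaining
\[
\sum_{\xi\,:\,\xi(i)=n}\pi(\xi) = \frac{\sum_{\xi\,:\,\xi(i)=n}\theta(\xi)}{\sum_\xi \theta(\xi)},
\]
which is the second equality and completes the argument.

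There is essentially no hard step here: the result is a bookkeeping corollary of the two preceding lemmas. The only point deserving a word of care is the interchange of the limit $t\to\infty$ with the summation over configurations, which is legitimate precisely because $\mathcal{C}(N,M,B_*)$, and hence the index set $\{\xi:\xi(i)=n\}$, is finite, so no dominated-convergence argument is required. One could additionally remark that exchangeability of the agents is not used — the identity holds for each individual index $i$ — though when $f$ and the initial data are symmetric in the agents the right-hand side is independent of $i$.
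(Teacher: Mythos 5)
Your proof is correct and is exactly the argument the paper intends: the paper states that the theorem ``follows readily from Lemmas \ref{lem:usd} and \ref{lem:rev}'' and omits the details, and your decomposition of $\{S_i(t)=n\}$ into finitely many configuration events, passage to the limit term by term, and substitution of the reversible stationary measure is precisely that routine deduction spelled out.
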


From now on, we will take a ``carefully crafted'' nonlinear function $f$, which is provided in the following definition.
\begin{definition}\label{def_specific_f}
We define $f_* \colon \mathbb R \to (0,\infty)$ to be $f_*(x) \equiv 1$ for all $x \leq 1$ and $f_*(x) = \frac{x-1}{x}$ for $x > 1$.
\end{definition}

The motivation to consider and treat this particular (``strange looking'') nonlinear $f$ is at least two-fold and will be elaborated in section \ref{sec:sec3} when we employ a mean-field approach. For the moment, we proceed with this specific choice of $f$ to deduce the following corollary.

\begin{corollary}\label{corollary:theta}
If $f$ is taken to be $f_*$ as in Definition \ref{def_specific_f}, then 
$\theta(\xi) = \prod\limits_{i\,:\,\xi(i)>0} \xi(i)$.
\end{corollary}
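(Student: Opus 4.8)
The plan is to reduce the statement to a one-line telescoping computation carried out agent by agent. By the definition \eqref{eq:def_of_theta} of $\theta$, it suffices to show that for each $i$ the inner factor $\prod_{j=-B_*}^{\xi(i)} 1/f_*(j)$ equals $\xi(i)$ when $\xi(i)\ge 1$ and equals $1$ when $\xi(i)\le 0$; multiplying these over $i=1,\ldots,N$ then yields exactly $\prod_{i\,:\,\xi(i)>0}\xi(i)$, the agents with $\xi(i)=1$ contributing a harmless factor $1$ and those with $\xi(i)\le 0$ contributing nothing to the product.

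First I would record that, by Definition \ref{def_specific_f}, $f_*(j)=1$ for every integer $j\le 1$. Hence every factor in the inner product indexed by some $j\le 1$ equals $1$; in particular the lower limit $-B_*$ plays no role, and whenever $\xi(i)\le 1$ the entire inner product collapses to $1$. This already disposes of the cases $\xi(i)\le 0$ and, trivially, $\xi(i)=1$.

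For an agent with $n:=\xi(i)\ge 2$ I would split the product at $j=1$, discard the trivial part, and use $1/f_*(j)=j/(j-1)$ for $j\ge 2$ to write $\prod_{j=-B_*}^{n} 1/f_*(j)=\prod_{j=2}^{n} \tfrac{j}{j-1}$, a telescoping product equal to $n/1=n=\xi(i)$. Combining the two cases completes the argument. I do not expect any genuine obstacle: the computation is elementary, and the only mild points requiring care are checking that the boundary value $\xi(i)=1$ is consistent with the stated formula (it is, contributing a factor $1$) and observing that the result is independent of $B_*$ precisely because $f_*\equiv 1$ on $(-\infty,1]$.
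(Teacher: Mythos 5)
Your proof is correct and follows essentially the same route as the paper: evaluate the inner product $\prod_{j=-B_*}^{\xi(i)} 1/f_*(j)$ agent by agent, observe it collapses to $1$ for $\xi(i)\le 0$ and telescopes to $\xi(i)$ for $\xi(i)>0$, and substitute into \eqref{eq:def_of_theta}. Your explicit handling of the boundary case $\xi(i)=1$ is a small point of extra care not spelled out in the paper, but the argument is identical in substance.
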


\begin{proof}
Let $\xi\in\mathcal{C}(N,M,B_*)$. Then for $i=1,2,\ldots,N$ we have
\begin{equation}\label{eq:c1}
\prod_{j=-B_*}^{\xi(i)} \frac{1}{f_*(j)}=
    \begin{cases}
        1 & \text{if } \xi(i)\leq 0,\\
        \xi(i) & \text{if } \xi(i) > 0,
    \end{cases}
\end{equation}
as $f_*(n) = 1$ when $n \leq 0$ and $\prod\limits_{j=-B_*}^{n} f_*(j) = 1\cdot \frac{1}{2}\cdot \frac{2}{3}\cdots \frac{n-1}{n} = \frac{1}{n}$ for $n> 0$. Inserting \eqref{eq:c1} into the definition \eqref{eq:def_of_theta} of $\theta(\xi)$ yields the advertised result.
\end{proof}

We now state some elementary identities from combinatorics, whose proofs are standard.
\begin{lemma}\label{lem:comb1}
For each $a \in \mathbb N$ and $b \in \mathbb N_+$, we have
\begin{equation}
\sum_{\substack{y_1+y_2\cdots+y_b = a \\ y_i\ge 0}}1 = \binom{a+b-1}{b-1}.
\end{equation}
\end{lemma}

\begin{proof}
$\displaystyle\sum_{\substack{y_1+y_2\cdots+y_b = a \\ y_i\ge 0}} 1$ is the number of integer solutions to $y_1+y_2\cdots+y_b = a$ with $y_i\ge 0$, which is known to be $\displaystyle\binom{a+b-1}{b-1}$.
\end{proof}

\begin{lemma}\label{lem:comb2} We have for each $n \in \mathbb N$ and $r \in \mathbb N_+$ that
\begin{equation}
\sum_{i=1}^n \,i\,\binom{n-i+r-1}{2r-1} = \binom{n+r}{2r+1} \,\,\,\text{and}\,\,\, \sum_{i=1}^n \,i\,\binom{n-i+r-1}{2r} = \binom{n+r}{2r+2}.
\end{equation}
\end{lemma}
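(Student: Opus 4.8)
The plan is to deduce both identities from the single formula
\[
\sum_{i=1}^n i \binom{n-i+r-1}{m} = \binom{n+r}{m+2},
\]
which I claim holds for every $n\in\mathbb{N}$, every $r\in\mathbb{N}_+$, and every integer $m\ge r-1$; the two assertions of the lemma are precisely the cases $m=2r-1$ and $m=2r$, both of which satisfy $m\ge r-1$ because $r\ge1$. I would prove this formula with ordinary generating functions, reading the left-hand side as the coefficient of $x^n$ in the Cauchy product of $\sum_{i\ge1}i\,x^i$ with $\sum_{j\ge0}\binom{j+r-1}{m}x^j$, where the summation index $j$ plays the role of $n-i$.

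The first step is to recall the two elementary series $\sum_{i\ge1}i\,x^i = x/(1-x)^2$ and $\sum_{\ell\ge0}\binom{\ell}{m}x^{\ell}=x^m/(1-x)^{m+1}$. Shifting $\ell=j+r-1$ in the second one and using $m\ge r-1$ to discard the vanishing terms with $\ell\le m-1$, one obtains $\sum_{j\ge0}\binom{j+r-1}{m}x^j = x^{m-r+1}/(1-x)^{m+1}$. Multiplying, the generating function of the left-hand side becomes $x^{m-r+2}/(1-x)^{m+3}$, and extracting the coefficient of $x^n$ via the negative-binomial expansion $[x^k](1-x)^{-d}=\binom{k+d-1}{d-1}$ gives exactly $\binom{n+r}{m+2}$. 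Specializing to $m=2r-1$ and $m=2r$ then yields the two displayed identities; note also that on the left-hand side the terms with $i$ so large that $n-i+r-1<m$ vanish automatically, so there is no issue with the summation range.

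The computation is essentially bookkeeping, and the only point that deserves an explicit word is the index shift in the second factor: one must check that $\binom{r-1}{m},\binom{r}{m},\ldots,\binom{m-1}{m}$ all vanish so that the shifted sum genuinely equals $x^{m-r+1}/(1-x)^{m+1}$, and this is exactly what $m\ge r-1$ guarantees. I do not expect any real obstacle here. (If one prefers a route without generating functions, the substitution $s=n+r-1-i$ turns the sum into $\sum_{s}(n+r-1-s)\binom{s}{m}$; splitting into two pieces and evaluating each by the hockey-stick identity $\sum_{s\le N}\binom{s}{m}=\binom{N+1}{m+1}$ together with the absorption rule $s\binom{s}{m}=(m+1)\binom{s}{m+1}+m\binom{s}{m}$, then simplifying with Pascal's relation, produces the same answer for $m=2r-1$ and, word for word, for $m=2r$.) I would present the generating-function version, which is the shorter write-up.
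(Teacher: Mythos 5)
Your argument is correct, and it takes a genuinely different route from the paper. The paper proves the two identities simultaneously by induction on $n+r$: writing $s(n,r)$ and $u(n,r)$ for the two sums, a single application of Pascal's identity inside the summand yields the coupled recursions $s(n,r)=s(n-1,r)+u(n,r-1)$ and $u(n,r)=u(n-1,r)+s(n-1,r)$, after which Pascal's identity on the right-hand sides closes the induction (at the cost of separately verifying the base cases $s(0,r)$, $u(0,r)$, $s(n,1)$, $u(n,1)$). You instead prove the single more general convolution
\[
\sum_{i=1}^{n} i\,\binom{n-i+r-1}{m}=\binom{n+r}{m+2},\qquad m\ge r-1,
\]
by multiplying $x/(1-x)^2$ against $x^{m-r+1}/(1-x)^{m+1}$ and reading off $[x^n]\,x^{m-r+2}(1-x)^{-(m+3)}=\binom{n+r}{m+2}$; the two statements of the lemma are the specializations $m=2r-1$ and $m=2r$. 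Your bookkeeping checks out: the index shift is legitimate precisely because $\binom{\ell}{m}=0$ for $0\le\ell<m$, which is what $m\ge r-1$ guarantees, and in the degenerate range $n+r<m+2$ both sides vanish, so the coefficient extraction is valid without restriction. (Your identity is in fact an instance of the Vandermonde-type convolution $\sum_k\binom{k}{1}\binom{N-k}{m}=\binom{N+1}{m+2}$ with $N=n+r-1$, which is another way to see why it holds.) What your approach buys is a single uniform statement with no coupled induction and no base-case verification; what the paper's approach buys is a fully elementary argument using nothing beyond Pascal's identity, which matches the self-contained combinatorial style of the surrounding lemmas. Either proof is acceptable here.
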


\begin{proof}
For ease of writing, let us define
\begin{equation}\label{eqn:s}
s(n,r) = \sum_{i=1}^n \,i\,\binom{n-i+r-1}{2r-1},
\end{equation}
and
\begin{equation}\label{eqn:u}
u(n,r) =  \sum_{i=1}^n \,i\,\binom{n-i+r-1}{2r}.
\end{equation}
We shall prove the desired result with induction on $n+r$.\\\\
Base Case: When $n=0$ or $r=1$, then we have the following results for $s(0,r)$, $s(n,1)$, $u(0,r)$ and $u(n,1)$.



\begin{equation*}
s(0,r) = \binom{0+r}{2r+1}, \,\,\, s(n,1) = \binom{n+1}{2\cdot 1+1},
\end{equation*}


and


\begin{equation*}
u(0,r) =  \binom{0+r}{2r+2}, \,\,\, u(n,1)  = \binom{n+1}{2\cdot 1+2}.
\end{equation*}

Inductive Step: Assume that our hypothesis holds for all $n+r$ such that $n \in \mathbb N$, $r \in \mathbb N_+$ and $1\leq n+r < m$ for some $m>1$. Then setting $n+r = m$ and employing Pascal's identity we have

\begin{equation*}
\begin{split}
s(n,r) &= \sum_{i=1}^n \,i\,\binom{n-i+r-1}{2r-1}\\
&= \sum_{i=1}^n \,i\,\binom{(n-1)-i+r-1}{2r-1}+\sum_{i=1}^n \,i\,\binom{n-i+(r-1)-1}{2r-2}\\
&= \sum_{i=1}^{n-1} \,i\,\binom{(n-1)-i+r-1}{2r-1}+\sum_{i=1}^n \,i\,\binom{n-i+(r-1)-1}{2r-2}\\
&= s(n-1,r) + u(n,r-1)\\
&=\binom{n-1+r}{2r+1}+\binom{n+r-1}{2r}= \binom{n+r}{2r+1},
\end{split}
\end{equation*}

and

\begin{equation*}
\begin{split}
u(n,r) &= \sum_{i=1}^n \,i\,\binom{n-i+r-1}{2r}\\
&= \sum_{i=1}^n \,i\,\binom{(n-1)-i+r-1}{2r}+\sum_{i=1}^n \,i\,\binom{(n-1)-i+r-1}{2r-1}\\
&= \sum_{i=1}^{n-1} \,i\,\binom{(n-1)-i+r-1}{2r}+\sum_{i=1}^{n-1} \,i\,\binom{(n-1)-i+r-1}{2r-1}\\
&= u(n-1,r) + s(n-1,r)\\
&=\binom{n+r-1}{2r+2}+\binom{n+r-1}{2r+1}= \binom{n+r}{2r+2}.
\end{split}
\end{equation*}

\end{proof}

\begin{lemma}\label{lem:comb3}
For each $n \in \mathbb N$ and $r \in \mathbb N_+$,
\begin{equation}
\sum_{\substack{x_1+x_2\cdots+x_r = n \\ x_i\ge 0}}x_1\,x_2\,\cdots\, x_r = \binom{n+r-1}{2\,r-1}.
\end{equation}
\end{lemma}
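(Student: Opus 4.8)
The plan is to prove the identity by induction on $r$, peeling off one summation variable at each step and invoking the first identity of Lemma \ref{lem:comb2}. Introduce the shorthand $g(n,r):=\sum_{\substack{x_1+\cdots+x_r=n\\ x_i\ge 0}} x_1 x_2\cdots x_r$ for the quantity on the left-hand side.

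For the base case $r=1$, the only admissible tuple is $x_1=n$, so $g(n,1)=n=\binom{n}{1}=\binom{n+1-1}{2\cdot 1-1}$, and we are done. For the inductive step I would fix $r\ge 2$, assume $g(m,r-1)=\binom{m+r-2}{2r-3}$ for every $m\in\mathbb N$, and split the sum according to the value $x_r=i$ with $0\le i\le n$. Since the $i=0$ term contributes nothing, this yields
\[g(n,r)=\sum_{i=1}^n i\sum_{\substack{x_1+\cdots+x_{r-1}=n-i\\ x_k\ge 0}} x_1\cdots x_{r-1}=\sum_{i=1}^n i\,g(n-i,r-1)=\sum_{i=1}^n i\,\binom{(n-i)+r-2}{2r-3},\]
using the inductive hypothesis with $m=n-i$ in the last step. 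It then remains to apply Lemma \ref{lem:comb2} with $r$ replaced by $r-1$ (legitimate because $r-1\in\mathbb N_+$), which gives $\sum_{i=1}^n i\binom{n-i+r-2}{2r-3}=\binom{n+r-1}{2r-1}$ and hence $g(n,r)=\binom{n+r-1}{2r-1}$, closing the induction.

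I expect this to be entirely routine; the only points requiring a little care are the bookkeeping of the binomial indices when substituting $r\mapsto r-1$ into Lemma \ref{lem:comb2}, and checking that the inductive hypothesis is indeed available for all the arguments $m=n-i\in\{0,1,\dots,n\}$ appearing in the reduced sum (it is). As an alternative that avoids induction altogether, one could argue via generating functions: since $\sum_{x\ge 0}xz^x=z/(1-z)^2$, the ordinary generating function of $n\mapsto g(n,r)$ is $\big(z/(1-z)^2\big)^r=z^r/(1-z)^{2r}$, and extracting the coefficient of $z^n$ using $1/(1-z)^{2r}=\sum_{m\ge 0}\binom{m+2r-1}{2r-1}z^m$ reproduces $\binom{n+r-1}{2r-1}$ (which also correctly equals $0$ for $n<r$).
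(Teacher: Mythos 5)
Your proof is correct and follows essentially the same route as the paper: induction on $r$, peeling off the last summation variable and invoking the first identity of Lemma \ref{lem:comb2} (the paper writes the step as passing from $r=m$ to $r=m+1$, while you phrase it as descending from $r$ to $r-1$, but the computation is identical). The generating-function remark is a valid alternative not present in the paper, but the core argument matches.
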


\begin{proof}
We will show the desired result using induction on $r$.\\
Base Case: $r=1$. We have

\begin{equation*}
\sum_{\substack{x_1+x_2\cdots+x_r = n \\ x_i\ge 0}}x_1\,x_2\,\cdots\, x_r
= n
= \binom{n+1-1}{2\cdot 1 - 1}.
\end{equation*}

Inductive Step: Assume that our hypothesis holds for $r=m$ where $m>1$. Then setting $r=m+1$ and employing Lemma~\ref{lem:comb2}, we have that
\begin{equation*}
\begin{split}
\sum_{\substack{x_1+x_2\cdots+x_r = n \\ x_i\ge 0}}x_1\,x_2\,\cdots\, x_r &= \sum_{\substack{x_1+x_2\cdots+x_{m+1} = n \\ x_i\ge 0}}x_1\,x_2\,\cdots\, x_{m+1}\\
&= \sum_{i=1}^n i\,\sum_{\substack{x_1+x_2\cdots+x_{m} = n-i \\ x_i\ge 0}}x_1\,x_2\,\cdots\, x_{m}\\
&= \sum_{i=1}^n i\,\binom{n+m-i-1}{2m-1}\\
&= \binom{n+m}{2m+1}= \binom{n+r-1}{2r-1}.\\
\end{split}
\end{equation*}
\end{proof}

Now we can specialize the content of Theorem \ref{thm:genf_dist} to deduce the following convergence result.
\begin{corollary}\label{corollary:convegence_f*}
If $f$ is taken to be $f_*$ as in Definition \ref{def_specific_f}, set
\begin{equation}
\varphi(N,M,B_*) = \displaystyle\sum_{\xi}\theta(\xi)
\end{equation}
then
\[\varphi(N,M,B_*) = \sum_{a=0}^{B_*} \sum_{b=0}^{N-1}\binom{N}{b}\binom{a+b-1}{b-1}\binom{M+a+N-b-1}{2(N-b)-1}.\]
Consequently, the distribution of dollars among the $N$-agents system in the large time limit is given by
\begin{equation}\label{eq:converg_f*}
\lim_{t\to\infty} \mathbb P[S_i(t) = n] =
    \begin{cases}
        \frac{n\, \varphi(N-1,M-n,B_*)}{\varphi(N,M,B_*)} & \text{if } n > 0,\\
        \frac{\varphi(N-1,M-n,B_* + n)}{\varphi(N,M,B_*)} & \text{if } n\leq 0.
    \end{cases}
\end{equation}
\end{corollary}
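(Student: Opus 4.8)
The plan is to evaluate $\varphi(N,M,B_*)=\sum_{\xi\in\mathcal{C}(N,M,B_*)}\theta(\xi)$ by a double stratification of the configuration space, starting from Corollary~\ref{corollary:theta}, which reduces the weight to $\theta(\xi)=\prod_{i:\xi(i)>0}\xi(i)$. First I would split each configuration $\xi$ according to the set $A=\{i:\xi(i)\le 0\}$ of agents carrying no positive balance, say $|A|=b$, and according to the total debt $a:=\sum_{i}\max\{-\xi(i),0\}$. The admissibility built into $\mathcal{C}(N,M,B_*)$ forces $0\le a\le B_*$ (the total amount borrowed from the bank cannot exceed its initial holdings $B_*$), and it forces $0\le b\le N-1$, since $\sum_i\xi(i)=M=N\mu\ge 1>0$ rules out every coordinate being non-positive. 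For fixed $b$ there are $\binom{N}{b}$ choices of $A$, and these agents contribute the trivial factor $1$ to $\theta(\xi)$; given that their debts sum to $a$, Lemma~\ref{lem:comb1} counts $\binom{a+b-1}{b-1}$ ways to distribute them. The remaining $N-b$ agents carry positive integers $x_1,\dots,x_{N-b}\ge 1$ with $x_1+\cdots+x_{N-b}=M+a$ (as the grand total is $M$), and they contribute $x_1\cdots x_{N-b}$ to $\theta(\xi)$.

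The key observation for this last block is that any term of $\sum_{x_1+\cdots+x_{N-b}=M+a}x_1\cdots x_{N-b}$ vanishes the moment some $x_k=0$, so the constraint $x_k\ge 1$ may be relaxed to $x_k\ge 0$ without altering the sum; Lemma~\ref{lem:comb3}, applied with $r=N-b\ge 1$ and $n=M+a$, then evaluates it to $\binom{M+a+N-b-1}{2(N-b)-1}$. Multiplying the three factors and summing over $b$ from $0$ to $N-1$ and $a$ from $0$ to $B_*$ produces exactly the asserted closed form for $\varphi(N,M,B_*)$.

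For the limiting distribution \eqref{eq:converg_f*}, I would invoke Theorem~\ref{thm:genf_dist}, which gives $\lim_{t\to\infty}\mathbb{P}[S_i(t)=n]=\frac{\sum_{\xi:\xi(i)=n}\theta(\xi)}{\varphi(N,M,B_*)}$, and compute the numerator by freezing agent $i$ at the value $n$ and letting the other $N-1$ agents range freely. Deleting coordinate $i$ is a bijection onto an appropriate smaller configuration set: if $n>0$ then $\theta(\xi)=n\cdot\theta(\xi')$ with $\xi'$ the restriction, agent $i$ uses none of the debt budget, so $\xi'\in\mathcal{C}(N-1,M-n,B_*)$ and the numerator equals $n\,\varphi(N-1,M-n,B_*)$; if $n\le 0$ then agent $i$ contributes the factor $1$ and consumes $-n$ units of the debt budget, so $\xi'\in\mathcal{C}(N-1,M-n,B_*+n)$ and the numerator equals $\varphi(N-1,M-n,B_*+n)$. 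Dividing by $\varphi(N,M,B_*)$ gives the two cases.

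I expect the main obstacle to be bookkeeping rather than anything conceptual: correctly tracking how the bank's debt budget is apportioned — in particular, using that the side constraint defining $\mathcal{C}(N,M,B_*)$ really amounts to $\sum_i\max\{-\xi(i),0\}\le B_*$ (this is what couples $a$ to $B_*$ and caps the outer sum at $B_*$), and being careful with signs when agent $i$ itself carries $n\le 0$, so the residual budget available to the other $N-1$ agents drops to $B_*+n=B_*-|n|$. The one genuinely useful combinatorial trick, relaxing $x_k\ge 1$ to $x_k\ge 0$ in order to apply Lemma~\ref{lem:comb3}, is immediate once spotted.
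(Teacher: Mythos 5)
Your proof follows essentially the same route as the paper: stratify the configurations by the number $b$ of non-positive agents and the total debt $a$, apply Lemma~\ref{lem:comb1} to the debt block and Lemma~\ref{lem:comb3} to the positive block (after relaxing $x_k\ge 1$ to $x_k\ge 0$, a step the paper uses silently), and multiply by $\binom{N}{b}$. You additionally spell out the derivation of \eqref{eq:converg_f*} from Theorem~\ref{thm:genf_dist} via the deletion bijection onto $\mathcal{C}(N-1,M-n,B_*)$ or $\mathcal{C}(N-1,M-n,B_*+n)$, which the paper's proof leaves implicit; that part is correct as well.
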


\begin{proof}
If we define $\phi(a,b,N,M,B_*)$ to be the weighted sum of $\theta(\xi)$ over all configurations with $N$ agents, $M$ dollars of agents combined initial wealth, $B_*$ dollars in the bank initially, and with $a$ dollars of total debt and $b$ agents with less than or equal to zero dollars, then we have that
\begin{equation} \label{eq1}
\varphi(N,M,B_*) = \displaystyle\sum_{\xi}\theta(\xi) = \sum_{a=0}^{B_*} \sum_{b=0}^{N-1} \phi(a,b,N,M,B_*).
\end{equation}

Invoking Lemmas \ref{lem:comb1} and \ref{lem:comb3} gives rise to
\begin{equation} \label{eq1}
\begin{split}
\phi(a,b,N,M,B_*) & = \binom{N}{b}\sum_{\substack{y_1+y_2+\cdots+y_b = a \\ y_i\ge 0}} \,\,\,\sum_{\substack{x_1+x_2+\cdots+x_{N-b} = M+a\\ x_i > 0}} x_1\,x_2\,\cdots\, x_{N-b} \\
 & = \binom{N}{b}\binom{a+b-1}{b-1} \,\,\,\sum_{\substack{x_1+x_2+\cdots+x_{N-b} = M+a\\ x_i > 0}} x_1\,x_2\,\cdots\, x_{N-b} \\
  & = \binom{N}{b}\binom{a+b-1}{b-1}\binom{M+a+N-b-1}{2(N-b)-1},
\end{split}
\end{equation}
whence the proof is completed.
\end{proof}

\begin{remark}
Even though the equilibrium distribution (for any fixed $N \in \mathbb N_+$) \eqref{eq:converg_f*} is explicit in $N$, it is a very hard task to take the large population limit $N \to \infty$ (by using some basic algebraic manipulations) in order to simplify the expression for
\begin{equation}\label{eq:open}
\lim\limits_{N \to \infty} \lim\limits_{t \to \infty} \mathbb P[S_i(t) = n].
\end{equation}
We henceforth leave the computations/simplications of \eqref{eq:open} as a open problem. On the other hand, we will employ the mean-field approach in the next section and deduce the explicit equilibrium distribution (for the choice $f = f_*$).
\end{remark}



\section{Formal mean-field limit}\label{sec:sec3}
\setcounter{equation}{0}

In this section, we carry out a formal mean-field argument of the stochastic agent-based dynamics \eqref{biased_exchange_with_debt} as the number of agents $N$ goes to infinity, following the procedure detailed in \cite{cao_uncovering_2022}.

The biased exchange model with collective debt limit can be written in terms of a system of stochastic differential equations, thanks to the framework set up in \cite{cao_derivation_2021,cao_uncovering_2022} for the study of the basic unbiased exchange model as well as some of its variants. Introducing $\{\mathrm{N}_t^{(i,j)}\}_{1\leq i,j\leq N}$, which represents a collection of independent Poisson processes with intensity $\frac{f(S_i)}{N}$, the evolution of each $S_i$ is given by:
\begin{equation}
  \label{eq:SDE}
  \dd S_i = -\sum \limits^{N}_{j=1} \underbrace{\left(1-\mathbbm{1}_{(-\infty,0]}(S_i)\cdot \delta_0(B_c)\right) \dd \mathrm{N}^{(i,j)}_{t}}_{\text{``$i$ gives to $j$''}} + \sum \limits^{N}_{j=1} \underbrace{\left(1-\mathbbm{1}_{(-\infty,0]}(S_j)\cdot \delta_0(B_c)\right) \dd \mathrm{N}^{(j,i)}_{t}}_{\text{``$j$ gives to $i$''}},
\end{equation}
where we use the notation $\delta_0(B_c) := \mathbbm{1}\{B_c = 0\}$. By the obvious symmetry, we can focus on the case when $i=1$ and notice that whenever $B_c \geq 1$, the SDE for $S_1$ simplifies to
\begin{equation}
\label{eq:SDE_phase1}
\dd S_1 = -\sum \limits^{N}_{j=1} \dd \NN^{(1,j)}_t + \sum \limits^{N}_{j=1} \dd \NN^{(j,1)}_t
\end{equation}
If we introduce
\begin{displaymath}
  \mathrm{\bf N}^1_t = \sum_{j=1}^N \mathrm{N}^{(1,j)}_t,\quad \mathrm{\bf M}^1_t = \sum_{j=1}^N \mathrm{N}^{(j,1)}_t,
\end{displaymath}
then the two Poisson processes $\mathrm{\bf N}^1_t$ and $\mathrm{\bf M}^1_t$ are of intensities $f(S_1)$ and $\frac{1}{N}\sum_{j=1}^N f(S_j)$, respectively.  Motivated by \eqref{eq:SDE_phase1}, we give the following definition of the limiting dynamics of $S_1(t)$ as $N \rightarrow \infty$ from the process point of view, providing that $B_c \geq 1$.

\begin{definition}[\textbf{Mean-field equation --- Phase \RN{1}}]
We define $\overbar{S}_1$ to be the compound Poisson process satisfying the following SDE:
\begin{equation}
\label{eq:SDE_phase1_limit}
\dd \overbar{S}_1 = -\dd \overbar{\mathrm{\bf N}}^1_t + \dd \overbar{\mathrm{\bf M}}^1_t,
\end{equation}
in which $\overbar{\mathrm{\bf N}}^1_t$ and $\overbar{\mathrm{\bf M}}^1_t$ are independent Poisson processes with intensities $f(\overbar{S}_1)$ and $\sum_{n \in \mathbb Z} f(n)\,p_n$, where ${\bf p}(t)=\left(\ldots,p_{-n}(t),\ldots,p_{-1}(t),p_0(t),p_1(t),\ldots,p_n(t),\ldots\right)$ denotes the law of the process $\overbar{S}_1(t)$, i.e. $p_n(t) = \mathbb P\left[\overbar{S}_1(t) = n\right]$.
\end{definition}

Its time evolution is given by the following dynamics:
\begin{equation}
    \label{eq:Q_1_a}
\frac{\dd}{\dd t} {\bf p}(t) = Q_1[{\bf p}(t)]
\end{equation}
with
\begin{equation}
  \label{eq:Q_1_b}
  Q_1[{\bf p}]_n := f(n+1)\,p_{n+1} + \left(\sum_{n \in \mathbb Z} f(n)\,p_n\right)\,p_{n-1} - f(n)\,p_n - \left(\sum_{n \in \mathbb Z} f(n)\,p_n\right)\,p_n ~~ \text{for } n \in \mathbb Z.
\end{equation}

The distribution ${\bf p}(t)$ naturally preserves its mass (since it is a probability mass function) and the mean value (as the total amount of money in the whole system is preserved), thus if we introduce the affine subspaces:
\begin{equation}\label{eq:probability_space}
\mathcal{S}_\mu = \{{\bf p} \mid \sum_{n \in \mathbb Z} p_n =1,~p_n \geq 0,~\sum_{n \in \mathbb Z} n\,p_n =\mu\}
\end{equation}
and \[\mathcal{S}^+_\mu = \{{\bf p} \in \mathcal{S}_\mu \mid p_n = 0~\textrm{for}~ n < 0\},\] then it is clear that the unique solution ${\bf p}(t)$ of \eqref{eq:Q_1_a}-\eqref{eq:Q_1_b} with ${\bf p}(0) \in \mathcal{S}^+_\mu$ satisfies ${\bf p}(t) \in \mathcal{S}_\mu$ for all $t > 0$. Moreover, we will introduce a class $\mathcal{G}$ of functions $f \colon \mathbb R \to (0,\infty)$ by setting
\begin{equation}\label{eq:class_G}
\mathcal{G} = \left\{f  ~~\Big|~~ \sum_{n\leq 0} f(n)\,p_n \cdot \sum_{n\geq 0} p_n - \left(\sum_{n \geq 1} f(n)\,p_n\right)\,\sum_{n\leq -1} p_n \geq 0,~~\forall~ {\bf p} \in \mathcal{S}_\mu.\right\}
\end{equation}
If we define the average amount of ``debt'' per agent as
\begin{equation}
  \label{eq:debt}
  D[{\bf p}] = -\sum_{n \leq -1} n\,p_n,
\end{equation}
then this quantity will be non-decreasing for all $f \in \mathcal{G}$ since
\begin{equation}\label{eq:debt_accumulation}
\begin{aligned}
\frac{\dd}{\dd t} D[{\bf p}(t)] &= \sum_{n\leq 0} f(n)\,p_n - \left(\sum_{n \in \mathbb Z} f(n)\,p_n\right)\,\sum_{n\leq -1} p_n \\
&= \sum_{n\leq 0} f(n)\,p_n \cdot \sum_{n\geq 0} p_n - \left(\sum_{n \geq 1} f(n)\,p_n\right)\,\sum_{n\leq -1} p_n \geq 0,
\end{aligned}
\end{equation}
in which the last inequality follows directly from the assumption that $f \in \mathcal{G}$.

\begin{remark}
The class $\mathcal{G}$ of functions (at least) includes the following class of decreasing functions defined by
\begin{equation}\label{eq:class_D}
\mathcal{D} = \left\{f  \mid \textrm{$f$ is decreasing and $f(0) = 1$}.\right\}
\end{equation}
Indeed, if $f$ is decreasing and $f(0) = 1$, we have
\begin{align*}
&\sum_{n\leq 0} f(n)\,p_n \cdot \sum_{n\geq 0} p_n - \left(\sum_{n \geq 1} f(n)\,p_n\right)\,\sum_{n\leq -1} p_n \\
&\quad \geq \sum_{n\leq -1} f(n)\,p_n \cdot \sum_{n\geq 1} p_n - \left(\sum_{n \geq 1} f(n)\,p_n\right)\,\sum_{n\leq -1} p_n \geq 0.
\end{align*}
\end{remark}

From now on, we will always impose the condition $f \in \mathcal{G}$ on the function $f$ unless otherwise stated. Since the average amount of debt each agent can sustain in the underlying stochastic $N$-agents system is at most $\mu\,\nu$, we therefore terminate the evolution of \text{\bf Phase I} \eqref{eq:Q_1_a}-\eqref{eq:Q_1_b} until $t = t_*$, where
\begin{equation}
\label{eq:t_*}
t_* = \min\limits_{t\geq 0} \{D[{\bf p}(t)] =  \mu\,\nu\}.
\end{equation}

\begin{remark}
The primary reason for assuming that $f$ belongs to the class $\mathcal{G}$ lies the desire to ensure that $\frac{\dd}{\dd t} D[{\bf p}(t)] \geq 0$, which is a sufficient (but not necessarily) condition to guarantee that the \text{\bf Phase I} will terminate at a finite time $t_* < \infty$. It would be harder to show that the average amount of ``debt'' per agent $D[{\bf p}]$ will reach the threshold $\mu\,\nu$ in finite time when $f$ is not assumed to be a member of $\mathcal{G}$.
\end{remark}

At the level of the agent-based system, after the first time when there is no cash in the bank, i.e., when $t \geq t^{\textrm{stoc}}_* := \min\{\tau > 0 \mid B_c(\tau) = 0\}$, the analysis is much more involved so heuristic reasoning plays a major role in this manuscript. We notice that we have the following basic relations for all time $t \geq 0$:
\begin{equation}\label{eq:evolution_Bc}
B_c = B_* - \sum_{i=1}^N S^{-}_i,~~ \dd B_c = -\sum_{i=1}^N \dd S^{-}_i,
\end{equation}
and $B_c \geq 0$. Therefore, the evolution of $B_c$ is much faster than the evolution of each of the $S_i$'s, indicating that \eqref{eq:evolution_Bc} is really a ``fast'' dynamics compared to \eqref{eq:SDE}. These observations motivate the next definition:

\begin{definition}[\textbf{Mean-field equation --- Phase \RN{2}}]
\label{def:phase2}
We define $\widetilde{S}_1$ (for $t \geq t^{\textrm{stoc}}_*$) to be the compound Poisson process satisfying the following SDE:
\begin{equation}
\label{eq:SDE_phase2_limit}
\dd \widetilde{S}_1 = -\left(1 - \mathbbm{1}_{(-\infty,0]}(\widetilde{S}_1)\cdot Z\right)\dd \widetilde{\mathrm{\bf N}}^1_t + \left(1 - (1-Y)\cdot Z\right)\dd \widetilde{\mathrm{\bf M}}^1_t,
\end{equation}
in which $\widetilde{\mathrm{\bf N}}^1_t$ and $\widetilde{\mathrm{\bf M}}^1_t$ are independent Poisson processes with intensities $f(\widetilde{S}_1)$ and $\sum_{n \in \mathbb Z} f(n)\,\mathbb{P}\left[\widetilde{S}_1 = n\right]$. Moreover, $Y = Y(t) \sim \mathcal{B}\left(r(t)\right)$ and $Z = Z(t) \sim \mathcal{B}\left(q_0(t)\right)$ are independent Bernoulli random variables with parameters
\begin{equation}\label{eq:r}
  r = \mathbb{P}\left[\widetilde{S}_1 \geq 1\right]
\end{equation}
and \begin{equation}\label{eq:q0}
  q_0 = 1 - \frac{\mathbb{P}\left[\widetilde{S}_1 \leq -1\right]\cdot \sum_{n\geq 1} f(n)\,\mathbb{P}\left[\widetilde{S}_1=n \right]}{\sum_{n\leq 0} f(n)\,\mathbb{P}\left[\widetilde{S}_1=n \right]\cdot \mathbb{P}\left[\widetilde{S}_1 \geq 0\right]}.
\end{equation}
respectively.
\end{definition}

We again denote by ${\bf p}(t)=\left(\ldots,p_{-n}(t),\ldots,p_{-1}(t),p_0(t),p_1(t),\ldots,p_n(t),\ldots\right)$ the law of the process $\widetilde{S}_1(t)$ for $t \geq t_*$, i.e. $p_n(t) = \mathbb P\left[\widetilde{S}_1(t) = n\right]$ for $t \geq t_*$. Its time evolution is given by the following dynamics:
\begin{equation}
  \label{eq:Q_2_a}
\frac{\dd}{\dd t} {\bf p}(t) = Q_2[{\bf p}(t)]
\end{equation}
with
\begin{equation}
  \label{eq:Q_2_b}
Q_2[{\bf p}]_n = \left\{
    \begin{array}{ll}
      (1-q_0)\,\left[f(n+1)\,p_{n+1} - f(n)\,p_n\right] + \left[\tilde{r} + (\tilde{d} + p_0)\cdot(1-q_0)\right]\,(p_{n-1}-p_n), &\quad  n \leq -1,\\
      f(1)\,p_1 - (1-q_0)\,p_0 + \left[\tilde{r} + (\tilde{d} + p_0)\cdot(1-q_0)\right]\,(p_{-1}-p_0), & \quad n= 0, \\
      f(n+1)\,p_{n+1} - f(n)\,p_n + \left[\tilde{r} + (\tilde{d} + p_0)\cdot(1-q_0)\right]\,(p_{n-1}-p_n), & \quad n \geq 1,
    \end{array}
  \right.
\end{equation}
where
\begin{equation}
  \label{eq:Q_2_c}
  r = \sum_{n\geq 1} p_n \quad \text{and} \quad d = \sum_{n\leq -1} p_n \quad \text{and} \quad \tilde{r} = \sum_{n\geq 1} f(n)\,p_n \quad \text{and} \quad \tilde{d} = \sum_{n\leq -1} f(n)\,p_n.
\end{equation}
We remark here that $r$ and $d$ represent the proportion of ``rich'' and ``indebted'' agents, respectively.

\begin{remark}
We will illustrate the motivation behind Definition \ref{def:phase2}. Taking the large population limit $N \to \infty$, if we denote by ${\bf q}(t) = \left(q_0(t),q_1(t),\ldots\right)$ the law of $B_c(t)$, then the transition rates of $B_c(t)$ can be described by figure \ref{fig:evolution_bank} below.

\begin{figure}[!htb]
\centering
\includegraphics[width=.7\textwidth]{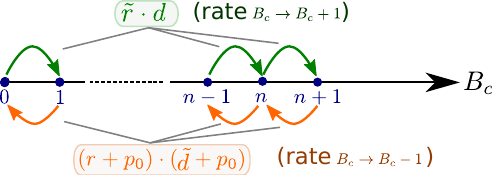}
\caption{Schematic illustration of the evolution of $B_c$ (amount of cash in the bank).}
\label{fig:evolution_bank}
\end{figure}

To be precise, the transition $B_c \to B_c + 1$ occurs at rate $\tilde{r}\cdot d$ when a ``rich'' agent is being picked ($S_i ≥ 1$) and gives a dollar to an ``indebted'' agent ($S_j<0$). Similarly, the transition $B_c \to B_c - 1$ occurs at rate $(\tilde{d} + p_0)\cdot (r+p_0)$ when an agent without dollar is being picked ($S_i≤0$) and gives a dollar to an agent without debt ($S_j≥0$) and moreover $B_c \geq 1$. As the evolution of $B_c$ is a ``fast'' dynamics (compared to the evolution of each agents' wealth), we assume that its distribution will converge to its ergodic invariant distribution within a time-scale that is negligible compared to the evolution of each of the $S_i$'s. Thus, from the following detailed balance equation at equilibrium
\[q_n\cdot \tilde{r}\cdot d = q_{n+1}\cdot (\tilde{d} + p_0) \cdot (r+p_0),\] one arrives at $q_n = \left(\frac{\tilde{r}\,d}{(r+p_0)\,(\tilde{d}+p_0)}\right)^n q_0$ for all $n\geq 0$. Due to identity $\sum_{n \geq 0} q_n = 1$, we deduce that
\[q_0 = 1 - \frac{\tilde{r}\,d}{(r+p_0)\,(\tilde{d}+p_0)},\] which coincides with the expression \eqref{eq:q0}. Meanwhile, the transition rates of $\widetilde{S}_1$ can be described by figure \ref{fig:evolution_tildeS}.

\begin{figure}[!htb]
  \centering
  \includegraphics[width=.97\textwidth]{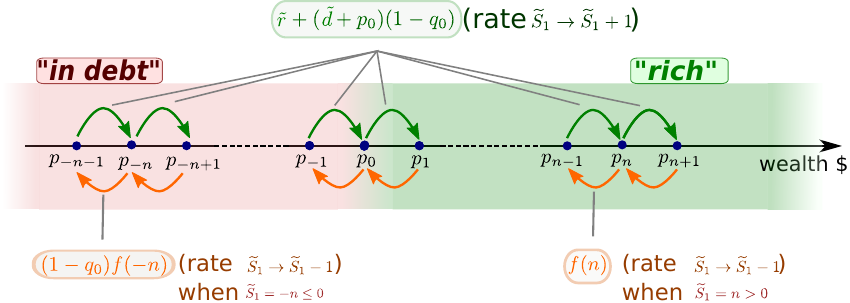}
\caption{Transition rates for the evolution of $\widetilde{S}_1$ in \text{\bf Phase II}.}
\label{fig:evolution_tildeS}
\end{figure}

This suggests that the evolution of ${\bf p}(t)$ should obey
\begin{equation}\label{eq:Qtilde}
\frac{\dd}{\dd t} {\bf p}(t) = \widetilde{Q}[{\bf p}(t)],
\end{equation}
with
\begin{equation}\label{eq:Qtilde_expression}
 \widetilde{Q}[{\bf p}]_n= \left\{
    \begin{array}{cclll}
      (1-q_0)\,(f(n+1)\,p_{n+1}-f(n)\,p_n)  &+& γ\,(p_{n-1}-p_n), & \quad n \leq -1,\\
      f(1)\,p_1 - (1-q_0)\,p_0  &+& γ\,(p_{-1}-p_0), & \quad n= 0, \\
      f(n+1)\,p_{n+1} - f(n)\,p_n  &+& γ(p_{n-1}-p_n), & \quad n \geq 1.
    \end{array}
  \right.
\end{equation}
with $γ=\tilde{r} + (\tilde{d} + p_0)\cdot(1-q_0)$. Thus, it is clear that the system \eqref{eq:Qtilde}-\eqref{eq:Qtilde_expression} coincides with \eqref{eq:Q_2_a}-\eqref{eq:Q_2_b}.
\end{remark}

In \text{\bf Phase II}, the average debt $-\sum_{n\leq -1} n\,p_n$ is preserved, therefore we can further restrict the affine space where the solution ${\bf p}(t)$ of \eqref{eq:Q_2_a}-\eqref{eq:Q_2_b} lives:
\begin{equation}\label{eq:SUV}
\mathcal{S}_{\mu,\nu} = \{{\bf p} \mid \sum_{n \in \mathbb Z} p_n =1,~p_n \geq 0,~\sum_{n \in \mathbb Z} n\,p_n =\mu,~-\sum_{n \leq 0} n\,p_n = \mu\,\nu\},
\end{equation}
it is straightforward to check that if ${\bf p}(t_*) \in \mathcal{S}_{\mu,\nu}$ then ${\bf p}(t) \in \mathcal{S}_{\mu,\nu}$ for all $t \geq t_*$.

We now turn to the search for the equilibrium distribution to \eqref{eq:Q_2_a}-\eqref{eq:Q_2_b}. Indeed, if we denote by ${\bf p}^* = \{p^*_n\}_{n \in \mathbb Z}$ the equilibrium solution to \eqref{eq:Q_2_a}-\eqref{eq:Q_2_b}, then for $n \geq 0$ we have
\[\frac{f(n+1)\,p^*_{n+1}}{p^*_n} = \frac{\sum_{k \geq 0} f(k+1)\,p^*_{k+1}}{\sum_{k \geq 0} p^*_k},\] leading to the conclusion that $\frac{f(n+1)\,p^*_{n+1}}{p^*_n}$ equals to the same constant for all $n \geq 0$. Similarly, we obtain that $\frac{f(n+1)\,p^*_{n+1}}{p_n}$ equals to the same constant for all $n \leq 0$. Therefore, for a general positive $f$ with $f(0) = 1$, the ``ansatz'' for the equilibrium distribution takes the following form
\begin{equation}\label{eq:p*n}
p^*_n = \left\{
    \begin{array}{ll}
     \frac{p^*_0}{f(n)\,\cdots\,f(1)\,f(0)}\,\beta^n_+, & \quad \text{if}~ n \geq 0,\\
     \frac{p^*_0}{f(n)\,\cdots\,f(-1)\,f(0)}\,\beta^{-n}_-, & \quad \text{if}~ n \leq 0.
    \end{array}
  \right.
\end{equation}
in which $\beta_{\pm} \in (0,1)$ and $p^*_0$ will be determined thought the following three constraints:
\begin{equation}\label{eq:constraints}
r^* + p^*_0+ d^* = 1, \quad \sum_{n \in \mathbb Z} n\,p^*_n = \mu, \quad \text{and} \quad -\sum_{n\leq -1} n\,p^*_n = \mu\,\nu.
\end{equation}

In particular, if we take $f = f_*$ where $f_*$ is given in Definition \ref{def_specific_f}, so that $f(n) \equiv 1$ for $n \leq 1$ and $f(n) = \frac{n-1}{n}$ for $n \geq 2$, then \eqref{eq:p*n} simplifies to
\begin{equation}\label{eq:p*n_example2}
p^*_n = \left\{
    \begin{array}{ll}
     n\,p^*_0\,\beta^n_+, & \quad \text{if}~ n \geq 1,\\
     p^*_0, &\quad \text{if}~ n = 0,\\
     \,p^*_0\,\beta^{-n}_-, & \quad \text{if}~ n \leq -1.
    \end{array}
  \right.
\end{equation}
Moreover, it is straightforward to check that $f_* \in \mathcal{G}$. Taking into account of the constraints \eqref{eq:constraints}, we obtain
\begin{subequations}\label{eq:3by3}
\begin{align}
p^*_0\,\left[1 + \frac{\beta_+}{(1-\beta_+)^2} + \frac{\beta_-}{1 - \beta_-} \right] & = 1,\label{eq:e1} \\
p^*_0\,\left[\frac{\beta^2_+ + \beta_+}{(1-\beta_+)^3} - \frac{\beta_-}{(1-\beta_-)^2}\right] &= \mu, \label{eq:e2}\\
p^*_0\,\frac{\beta_-}{(1-\beta_-)^2} &= \mu\,\nu \label{eq:e3}.
\end{align}
\end{subequations}
To solve the $3 \times 3$ system of nonlinear equations \eqref{eq:3by3}, we proceed as follows. Substituting \eqref{eq:e3} into \eqref{eq:e2} yields that
\begin{equation}\label{eq:e4}
\frac{1}{p^*_0} = \frac{\beta^2_+ + \beta_+}{(1-\beta_+)^3\,\mu\,(1+\nu)}.
\end{equation}
Also, we can deduce from \eqref{eq:e1} and \eqref{eq:e3} that
\begin{equation}\label{eq:e5}
\frac{\beta_-}{1 - \beta_-} = \frac{1}{p^*_0} - 1 - \frac{\beta_+}{(1-\beta_+)^2},
\end{equation}
and \begin{equation}\label{eq:e6}
\frac{\beta_-}{(1-\beta_-)^2} = \frac{1}{p^*_0}\,\mu\,\nu.
\end{equation}
Since $\frac{\beta_-}{(1-\beta_-)^2} = \frac{\beta_-}{1-\beta_-}\,\left(1 + \frac{\beta_-}{1-\beta_-}\right)$, we have
\begin{equation}\label{eq:e7}
\frac{1}{p^*_0}\,\mu\,\nu = \left(\frac{1}{p^*_0} - 1 - \frac{\beta_+}{(1-\beta_+)^2}\right)\left(\frac{1}{p^*_0} - \frac{\beta_+}{(1-\beta_+)^2}\right).
\end{equation}
Inserting \eqref{eq:e4} into \eqref{eq:e7} gives
\begin{equation*}
\frac{(\beta_+ + 1)\,\nu}{1+\nu}\,(1-\beta_+)^3 = \left(\frac{\beta^2_+ + \beta_+}{\mu\,(1+\nu)} - \beta_+\,(1-\beta_+) - (1 - \beta_+)^3\right)\left(\frac{\beta_+ + 1}{\mu\,(1+\nu)} - (1-\beta_+)\right),
\end{equation*}
which is equivalent to the following quartic equation (with only one unknown):
\begin{equation}\label{eq:quartic_eq}
c_4\,\beta^4_+ + c_3\,\beta^3_+ + c_2\,\beta^2_+ + c_1\,\beta_+ + c_0 = 0,
\end{equation}
where \[c_0 = 1 - \frac{\nu}{1+\nu} - \frac{1}{\mu\,(1+\nu)},~~c_1 = \frac{1}{\mu^2\,(1+\nu)^2} + \frac{2\,\nu}{1+\nu} - 3,\] \[c_2 = \frac{2}{\mu^2\,(1+\nu)^2} + 4,~~ c_3 = \frac{1}{\mu^2\,(1+\nu)^2} - \frac{2\,\nu}{1+\nu} - 3,~~\text{and}~~ c_4 = \frac{1}{\mu\,(1+\nu)} + \frac{\nu}{1+\nu} + 1.\]
It is natural to speculate that the system of nonlinear equations \eqref{eq:3by3} admits a unique solution for $(p^*_0, \beta_+, \beta_-) \in (0,1)^3$, for any prescribed $(\mu,\nu) \in \mathbb{N}^2_+$. Unfortunately we can not provide a rigorous proof of this natural conjecture. As a comprise, we are dedicated to solving the nonlinear system \eqref{eq:3by3} for specific choices of the pair $(\mu,\nu) \in \mathbb{N}^2_+$. For instance, when $\mu = \nu = 1$, one can find that $\beta_+ \approx 0.5852$, $p^*_0 = \frac{1}{21}\,\left(-11 + 29\,\beta_+ - 8\,\beta^2_+\right) \approx 0.15386$, and $\beta_- = \frac{1}{28}\,\left(25 - 15\,\beta_+ + 8\,\beta^2_+\right) \approx 0.6772$. The agent-based numerical simulation suggests that, as the number of agents and time go to infinity, the limiting distribution of money approaches the equilibrium distribution \eqref{eq:p*n_example2} as shown in figure \ref{simulation_agent_based_model}, in which we take $(\mu,\nu) = (1,1)$.

\begin{figure}[!h]
  \centering
  \includegraphics[scale=1]{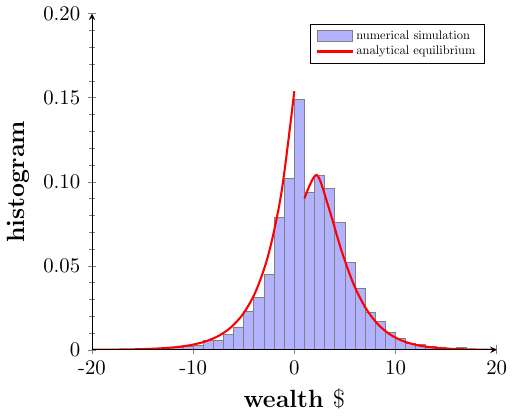}
  \caption{Simulation results for the $f$-biased exchange model with collective debt limit \eqref{biased_exchange_with_debt}, in which we take $f = f_*$ as given in Definition \ref{def_specific_f}, i.e., $f(n) = 1$ for $n \leq 1$ and $f(n) = (n-1)/n$ for $n \geq 2$. The blue histogram shows the distribution of money after $T = 500,000$ binary exchanges. Notice that the histogram is well-approximated by the distribution \eqref{eq:p*n_example2}. We used $N=10,000$ agents, each starting with $\mu = \$1$, and $\nu = \$1$.}
  \label{simulation_agent_based_model}
\end{figure}

To illustrate the convergence of ${\bf p}(t)$ toward the equilibrium ${\bf p}^*$, we run a simulation and plot the evolution of ${\bf p}(t)$ at various times (see figure \ref{fig:cv_equilibrium}-left) as well as the evolution of the $\ell^2$ distance between ${\bf p}(t)$ and ${\bf p}^*$ over time (see figure \ref{fig:cv_equilibrium}-right). We emphasize that the decay of the $\ell^2$ distance changes abruptly around $t_*$ which corresponds to the transition of the dynamics from phase I to phase II.

\begin{figure}[htbp]
  \centering
  \includegraphics[width=.97\textwidth]{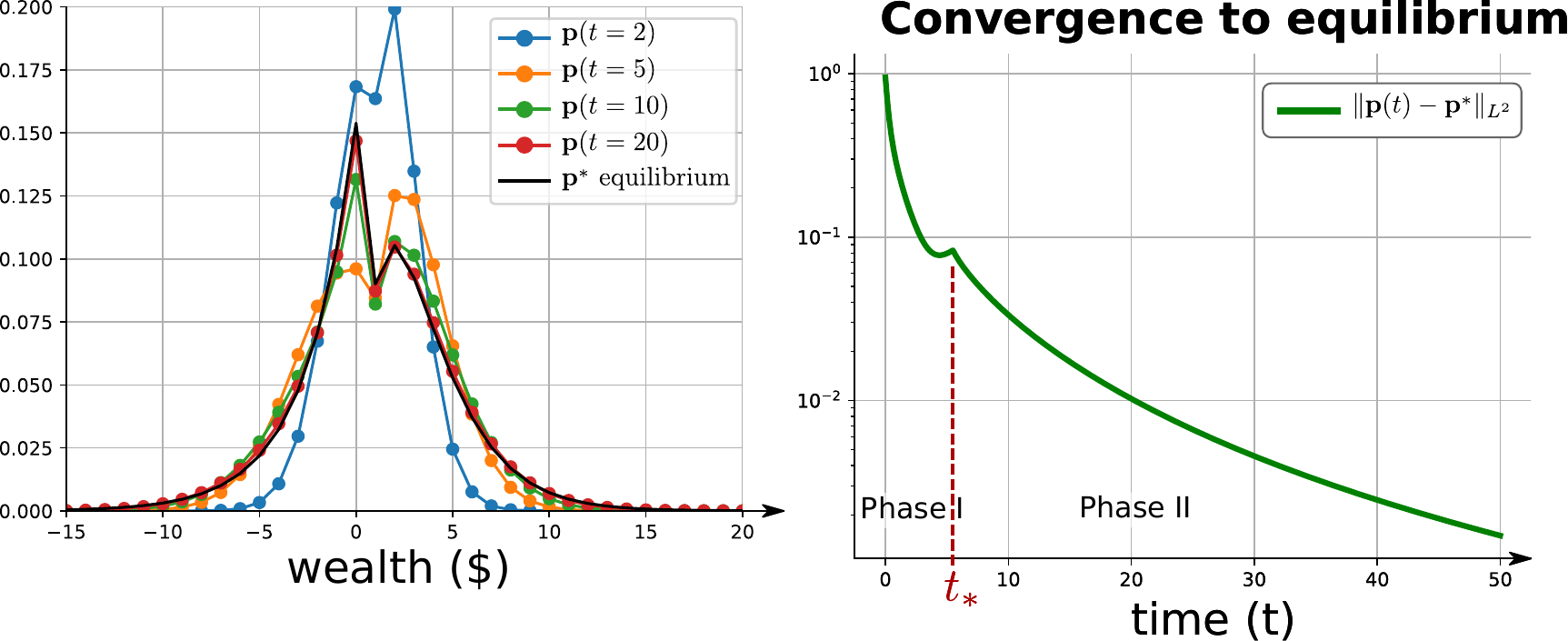}
  \caption{{\bf Left:} evolution of the distribution ${\bf p}(t)$ toward the equilibrium ${\bf p}^*$ \eqref{eq:p*n_example2}. The dynamics is in phase I till $t_*$. {\bf Right:} evolution of the $\ell^2$ distance between the solution ${\bf p}(t)$ and the equilibrium ${\bf p}^*$. In the simulation, we have set $\mu = \$1$ and $\nu = \$1$, along with the initial condition that $p_\mu(0)=1$ and $p_n(0) = 0$ for $n\neq \mu$.}
  \label{fig:cv_equilibrium}
\end{figure}

\begin{remark}
We now illustrate the main reasons for considering the ``strange-looking'' function $f_*$. Indeed, for the mean-field approach developed in \cite{cao_uncovering_2022} to be applicable in our generalized setting (where $f$ is no longer a fixed positive constant), we need to ensure that the quantity $D[{\bf p}]$ \eqref{eq:debt} to be non-decreasing during \textbf{Phase \RN{1}} so that we indeed have a two-phase dynamics. This consideration leads us to the restriction $f \in \mathcal{G}$. For example, if one wants to take the function $f$ to be $f(x) = 1$ for $-1\leq x \leq 1$ and $f(x) = \frac{|x|-1}{|x|}$ for $|x| > 1$, then an ansatz for the equilibrium distribution \eqref{eq:p*n} boils down to
\begin{equation}\label{eq:p*n_example}
p^*_n = \left\{
    \begin{array}{ll}
     n\,p^*_0\,\beta^n_+, & \quad \text{if}~ n \geq 1,\\
     p^*_0, &\quad \text{if}~ n = 0,\\
     -n\,p^*_0\,\beta^{-n}_-, & \quad \text{if}~ n \leq -1.
    \end{array}
  \right.
\end{equation}
However, with this choice of $f$ it is no longer clear whether $f \in \mathcal{G}$ or not.

Second of all, we also need to ensure that our choice of $f$ leads to the existence of a equilibrium distribution associated with \eqref{eq:Q_2_a}-\eqref{eq:Q_2_b} in the space \eqref{eq:SUV}. For instance, let us take $f(x) = \expo^{-\alpha\,x}$ for some $\alpha > 0$, which belongs to the class $\mathcal{D}$ \eqref{eq:class_D} but also approaches to infinity as $x \to -\infty$. In this scenario, the equilibrium ${\bf p}^*$ \eqref{eq:p*n} takes the form
\begin{equation}\label{eq:p*0}
p^*_n = \left\{
    \begin{array}{ll}
      \expo^{\alpha\,n\,(n+1)/2}\,p^*_0\,\beta^n_+, & \quad \text{if}~ n \geq 0,\\
      \expo^{\alpha\,n\,(n+1)/2}\,p^*_0\,\beta^{-n}_-, & \quad \text{if}~ n \leq 0,
    \end{array}
  \right.
\end{equation}
However, as $\alpha > 0$, ${\bf p}^*$ in the form of \eqref{eq:p*0} has no chance to be a probability mass function and our analytical framework breaks down. Lastly, we want to make sure that our choice $f$ allows an explicit computation of the equilibrium ${\bf p}^*$ at least for each fixed pair of the model parameters $(\mu,\nu) \in \mathbb{N}^2_+$, this consideration takes into account of the possibility of simulating the stochastic $N$-agents dynamics so that we can compare our analytical prediction with numerical observations. Putting together these concerns, we decide to pick $f$ to be $f_*$ as in Definition \ref{def_specific_f} in the majority part of the present paper.
\end{remark}

\section{Conclusion}
\setcounter{equation}{0}

In this manuscript, the so-called $f$-biased exchange model with collective debt limit is investigated, which serves as an extension of the unbiased exchange model with collective debt limit proposed in \cite{xi_required_2005} and revisited recently in \cite{cao_uncovering_2022,lanchier_rigorous_2018-1}. Although our generalized setting necessarily induces additional difficulty in the analysis using the framework developed in \cite{cao_uncovering_2022}, we are delighted to find that our analytical prediction of the asymptotic wealth distribution among agents as $N \to \infty$ and $t \to \infty$ based on our two-phase dynamics \eqref{eq:evolution_p} fits well with the numerical experiments, at least for a specific choice of the nonlinearity $f$.

To the best our of knowledge, rigorous treatment of econophysics models involving bank and debt is relatively rare in the literature and we refer to \cite{torregrossa_wealth_2017} for some rigorous analysis on certain econophysics models at the PDE level (whence completely bypassing the need to investigate the large population limit at the agent-based level). Our work also leaves many important open questions to be addressed in subsequent studies. For instance, is it possible to derive the two-phase dynamics \eqref{eq:evolution_p} rigorously~? Will it be possible to extend the framework developed in \cite{cao_uncovering_2022} so that analysis can be carried for broader class of functions $f$ (especially for whose functions $f \notin \mathcal{G}$)~? For those $f$ belonging to the class $\mathcal{G}$ such that ${\bf p}^*$ indeed defines a probability distribution, can we show rigorously that the solution of the \textbf{Phase \RN{2}} ODE system \eqref{eq:Q_2_a}-\eqref{eq:Q_2_b} converges to its equilibrium distribution ${\bf p}^*$~? We also emphasize that a rigorous study, based on the relative entropy, is possible when $f$ is assumed to a fixed positive constant \cite{cao_uncovering_2022}. Finally, for the Markov chain approach taken in section \ref{sec:sec2}, will it be possible to simplify the expression \eqref{eq:converg_f*} in Corollary \ref{corollary:convegence_f*} so that a closed and explicit form of the probability \eqref{eq:open} can be discovered, perhaps by resorting to advanced combinatorics tools or approximation theories~? We believe that answers to many of these problems deserve separate and thorough investigations on their own.\\

\noindent {\bf Acknowledgement} We highly appreciate the help from S\'ebastien Motsch on the generation of figure \ref{fig:cv_equilibrium}.

\end{document}